\newtheorem{theorem}{Theorem}[section]
\newtheorem{corollary}[theorem]{Corollary}
\newtheorem{lemma}[theorem]{Lemma}
\newtheorem{proposition}[theorem]{Proposition}
\def\E{\mathcal{E}}
\def\O{\mathcal{O}}
\def\CC{\mathbb{C}}
\def\FF{\mathbb{F}}
\def\NN{\mathbb{N}}
\def\PP{\mathbb{P}}
\def\QQ{\mathbb{Q}}
\def\ZZ{\mathbb{Z}}
\def\ppp{\mathfrak{p}}
\DeclareMathOperator{\End}{End}
\DeclareMathOperator{\GL}{GL}
\DeclareMathOperator{\cl}{cl}
\DeclareMathOperator{\ord}{ord}
\DeclareMathOperator{\tr}{tr}
\DeclareMathOperator{\numerator}{numerator}
\title{Iterative constructions of irreducible polynomials from isogenies}
\author{Alp Bassa \textsuperscript{a}
\and Gaetan Bisson \textsuperscript{b}
\and Roger Oyono \textsuperscript{b}
}
\date{\small
\textsuperscript{a} Department of Mathematics, Boğaziçi University, Turkey \\%
\textsuperscript{b} Laboratoire de mathématiques GAATI, University of French Polynesia
}
\begin{document}

\maketitle

\begin{abstract}
Let $S$ be a rational fraction and let $f$ be a polynomial over a finite field.
Consider the transform $T(f)=\operatorname{numerator}(f(S))$. In certain cases,
the polynomials $f$, $T(f)$, $T(T(f))\dots$ are all irreducible. For instance, in odd
characteristic, this is the case for the rational fraction $S=(x^2+1)/(2x)$,
known as the $R$-transform, and for a positive density of all irreducible
polynomials $f$.

We interpret these transforms in terms of isogenies of elliptic curves. Using
complex multiplication theory, we devise algorithms to generate a large number
of other rational fractions $S$, each of which yields infinite families of
irreducible polynomials for a positive density of starting irreducible
polynomials $f$.

\medskip

\noindent
\textbf{Keywords:}
irreducible polynomials, iterative families, Q- and R-transform, isogenies
\end{abstract}


\section{Introduction}

Let $S\in\QQ(x)$ be a rational fraction. Let $\FF_q$ be a finite field where
the reduction of the denominator of $S$ does not vanish. For any polynomial
$f\in\FF_q[x]$ we define the $S$-transform of $f$ as the polynomial
$T_S(f)=\numerator\left(f(S(x))\right)$ and we let
\[
I_S(f)=\left(T_S^k(f)\right)_{k\geq 0}
\]
denote the family of polynomials obtained by applying
$T_S^k=T_S\circ\cdots\circ T_S$ (the composition of $k$ copies of $T_S$) to the
polynomial $f$. We say that $S$ induces an irreducible family from $f$ if the
polynomials in the family $I_S(f)$ are all irreducible.

For example, well-known transforms include the so-called $Q$-transform which
uses the rational fraction $Q(x)=\frac{x^2+1}{x}$ and the so-called $R$-transform which
uses the rational fraction $R(x)=\frac{1}{2}\frac{x^2+1}{x}$; more explicitly, we have
\begin{align*}
T_Q(f)(x) &= x^{\deg(f)}\cdot f\left(\frac{x^2+1}{x}\right), \\
T_R(f)(x) &= (2x)^{\deg(f)}\cdot f\left(\frac{1}{2}\frac{x^2+1}{x}\right).
\end{align*}
Those two transforms have been studied extensively and are known to induce
irreducible families.

\begin{theorem}[Q-transform \cite{varshamov-recurrent, meyn-qtransform, kyuregyan-gf2s}]
Let $q=2^r$ and let $f(x)=\sum_{i=0}^{n} a_i x^i$ be an irreducible polynomial
in $\FF_q[x]$ with $a_n=1$. Let $\tr$ denote the trace from $\FF_q$ to $\FF_2$.
Assuming $\tr(a_{n-1})=\tr(a_1/a_0)=1$, the fraction $Q$ induces an irreducible
family from $f$.
\end{theorem}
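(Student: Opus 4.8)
The plan is to prove, by induction on $k$, the stronger assertion that each $T_Q^k(f)$ is irreducible \emph{and} again satisfies the two trace hypotheses of the theorem; this strengthening is what makes the induction close. The base case $k=0$ is exactly the hypothesis on $f$, so everything reduces to a single inductive step: if $f=\sum_{i=0}^{n}a_ix^i\in\FF_q[x]$ is monic irreducible of degree $n$ with $\tr(a_{n-1})=\tr(a_1/a_0)=1$, I must show that $T_Q(f)$ is monic irreducible of degree $2n$ and that its coefficients satisfy the analogous two conditions. Fix a root $\alpha$ of $f$, so that $\FF_q(\alpha)=\FF_{q^n}$ and $\alpha\neq 0$ (since $f\neq x$). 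The roots $\beta$ of $T_Q(f)$ are precisely the solutions of $Q(\beta)=\alpha$, that is, of the quadratic $\beta^2+\alpha\beta+1=0$ over $\FF_{q^n}$ (using $-1=1$); equivalently $\beta+\beta^{-1}=\alpha$, and the two roots lying above $\alpha$ are $\beta$ and $\beta^{-1}$.

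First I would settle irreducibility. Since $\beta$ satisfies a quadratic over $\FF_{q^n}$, the degree $[\FF_q(\beta):\FF_q]$ is $n$ or $2n$, and it equals $2n$ exactly when $\beta^2+\alpha\beta+1$ is irreducible over $\FF_{q^n}$. In characteristic $2$, the substitution $\beta=\alpha z$ turns this into the Artin–Schreier equation $z^2+z+\alpha^{-2}=0$, which has no root in $\FF_{q^n}$ precisely when $\tr_{\FF_{q^n}/\FF_2}(\alpha^{-2})=1$. As the absolute trace is invariant under squaring, this is the same as $\tr_{\FF_{q^n}/\FF_2}(\alpha^{-1})=1$. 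Writing $\tr_{\FF_{q^n}/\FF_2}=\tr\circ\tr_{\FF_{q^n}/\FF_q}$ and noting that $\tr_{\FF_{q^n}/\FF_q}(\alpha^{-1})$ is the sum of the roots of the reciprocal polynomial of $f$, hence equals $a_1/a_0$, the condition becomes exactly $\tr(a_1/a_0)=1$, which holds by hypothesis. Thus $\beta$ has degree $2n$ over $\FF_q$; as $T_Q(f)(x)=x^n f((x^2+1)/x)$ is monic of degree $2n$ and has $\beta$ as a root, it must be the minimal polynomial of $\beta$, hence irreducible.

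Next I would verify that the two trace conditions regenerate, which is the heart of the argument. Since $T_Q(f)$ is irreducible of degree $2n$, its roots are the conjugates $\beta^{q^i}$ for $0\le i<2n$. From $\alpha^{q^n}=\alpha$ it follows that $\beta^{q^n}$ is again a root of $\beta^2+\alpha\beta+1$, and since $\beta\notin\FF_{q^n}$ it must be the other root: $\beta^{q^n}=\beta^{-1}$. This pairs the $2n$ conjugates into the $n$ pairs $\{\beta^{q^i},\beta^{-q^i}\}$, each summing to $(\beta+\beta^{-1})^{q^i}=\alpha^{q^i}$. Writing $T_Q(f)=\sum_{j=0}^{2n}b_jx^j$, I then read off the second-leading coefficient $b_{2n-1}=\tr_{\FF_{q^{2n}}/\FF_q}(\beta)=\sum_{i=0}^{n-1}\alpha^{q^i}=\tr_{\FF_{q^n}/\FF_q}(\alpha)=a_{n-1}$, the constant term $b_0=\norm_{\FF_{q^{2n}}/\FF_q}(\beta)=\prod_{i=0}^{n-1}\beta^{q^i}\beta^{-q^i}=1$, and, because $\beta^{-1}=\beta^{q^n}$ is itself a conjugate of $\beta$, the inverse-root sum $b_1/b_0=\tr_{\FF_{q^{2n}}/\FF_q}(\beta^{-1})=\tr_{\FF_{q^{2n}}/\FF_q}(\beta)=a_{n-1}$. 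Hence $\tr(b_{2n-1})=\tr(b_1/b_0)=\tr(a_{n-1})=1$, so $T_Q(f)$ satisfies both hypotheses and the induction proceeds.

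The delicate point — and the part I expect to require the most care — is this regeneration step rather than the irreducibility step. Two structural facts make it work: the identity $\beta^{q^n}=\beta^{-1}$, which simultaneously forces $b_0=1$ and makes $\beta^{-1}$ a Galois conjugate of $\beta$ (so that the trace controlling $b_1/b_0$ collapses onto the one controlling $b_{2n-1}$); and the observation that the two hypotheses play genuinely distinct roles, with $\tr(a_1/a_0)=1$ driving irreducibility of the present transform while $\tr(a_{n-1})=1$ guarantees that \emph{both} conditions survive to the next one. It then remains only to dispose of routine degeneracies — that $a_0\neq 0$ and $\alpha\neq 0$ (forced by irreducibility of $f$ with $f\neq x$), and that the quadratic fibers are separable (automatic once $\alpha\neq 0$) — none of which presents a real difficulty.
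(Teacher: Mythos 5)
Your proof is correct, but there is nothing in the paper to compare it against: the paper states this theorem as a known result, citing Varshamov, Meyn and Kyuregyan, and never proves it. What the paper does instead is reinterpret $Q$ as the $x$-coordinate (Latt\`es) map of a degree-two isogeny --- the Verschiebung for $q=2$, or the endomorphism $1+\sqrt{-1}$ on $y^2=x^3+x$ --- and its general machinery (the Frobenius-acts-by-translation lemma and Theorem~\ref{th:main}) explains \emph{why} such transforms iterate; but the paper itself remarks that its main theorem does not directly apply in characteristic two to these degree-two maps, so the isogeny framework does not subsume the statement you were asked to prove. Your argument is essentially the classical one from the cited literature: Artin--Schreier theory ($z^2+z+c$ irreducible over $\FF_{2^m}$ iff the absolute trace of $c$ is $1$) handles irreducibility of the fiber $\beta^2+\alpha\beta+1$, reducing via $\tr_{\FF_{q^n}/\FF_q}(\alpha^{-1})=a_1/a_0$ to the hypothesis $\tr(a_1/a_0)=1$; and the identity $\beta^{q^n}=\beta^{-1}$ (self-reciprocity of $T_Q(f)$) is exactly what makes $b_0=1$, $b_{2n-1}=b_1/b_0=a_{n-1}$, so that both trace conditions regenerate from $\tr(a_{n-1})=1$ alone and the induction closes. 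The one comparison worth making explicit: your proof buys an unconditional, self-contained treatment of the characteristic-two case that the paper's isogeny viewpoint only covers heuristically (it observes empirically, in Section~\ref{sec:density} and Table~\ref{tbl:verschiebung}, that these maps still induce irreducible families for powers of two), whereas the paper's approach buys generality --- it manufactures many transforms in odd characteristic where Theorem~\ref{th:main} does apply. The only nitpicks are presentational: you should say explicitly that $a_0\neq 0$ is forced by the hypothesis (so $f\neq x$ and $\alpha\neq 0$), and that $\FF_{q^n}=\FF_q(\alpha)\subseteq\FF_q(\beta)$ because $\alpha=\beta+\beta^{-1}$, which is the step justifying that $[\FF_q(\beta):\FF_q]$ is $n$ or $2n$ rather than merely a divisor of $2n$.
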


\begin{theorem}[R-transform \cite{cohen-rtransform}]
Let $q$ be an odd prime power and let $f$ be a monic irreducible polynomial in
$\FF_q[x]$. Assume that $f(1)f(-1)$ is not a square in $\FF_q$ and, if
$q=3\bmod 4$, assume additionally that $\deg(f)$ is even. The fraction $R$
induces an irreducible family from $f$.
\end{theorem}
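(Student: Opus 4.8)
The plan is to follow the root-theoretic strategy natural for quadratic transforms: analyze how a root of $f$ lifts to roots of $T_R(f)$, then track a square-class invariant along the iteration. Write $n=\deg f$ and let $\alpha$ be a root of $f$, so $\FF_q(\alpha)=\FF_{q^n}$. Since $T_R(f)(x)=(2x)^n f\bigl(\tfrac12\tfrac{x^2+1}{x}\bigr)$ is monic of degree $2n$, its roots $\beta$ are exactly the solutions of $R(\beta)=\alpha'$ for $\alpha'$ a conjugate of $\alpha$; for the chosen $\alpha$ this is the quadratic $\beta^2-2\alpha\beta+1=0$. Each such $\beta$ satisfies $\alpha=R(\beta)\in\FF_q(\beta)$, so $\FF_{q^n}\subseteq\FF_q(\beta)$ and $[\FF_q(\beta):\FF_{q^n}]\le 2$. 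Hence $T_R(f)$ is irreducible precisely when this quadratic is irreducible over $\FF_{q^n}$, that is, when its discriminant $4(\alpha^2-1)$ — equivalently $\alpha^2-1$, since the characteristic is odd — is a non-square in $\FF_{q^n}$.

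The first key step is a norm criterion for squares: for $\gamma\in\FF_{q^n}^\times$ one has $\gamma^{(q^n-1)/2}=\norm_{\FF_{q^n}/\FF_q}(\gamma)^{(q-1)/2}$, so $\gamma$ is a square in $\FF_{q^n}$ if and only if $\norm(\gamma)$ is a square in $\FF_q$. Applying this to $\gamma=(\alpha-1)(\alpha+1)$ and using $f(\pm1)=(-1)^n\prod_i(\alpha_i\mp1)$ over the conjugates $\alpha_i$, a short computation gives $\norm(\alpha^2-1)=f(1)f(-1)$. Therefore, given that $f$ is irreducible, $T_R(f)$ is irreducible if and only if $f(1)f(-1)$ is a non-square in $\FF_q$ — exactly the stated hypothesis, which settles the single-step case.

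It remains to show the hypothesis propagates, so the whole family $I_R(f)$ stays irreducible. Here I would compute the invariant directly on the transform: since $R(1)=1$ and $R(-1)=-1$, evaluating $g=T_R(f)$ gives $g(1)=2^n f(1)$ and $g(-1)=(-2)^n f(-1)$, whence
\[
g(1)g(-1)=(-1)^n\,(2^n)^2\,f(1)f(-1).
\]
Because $(2^n)^2$ is a square, $g(1)g(-1)$ lies in the same square class as $(-1)^n f(1)f(-1)$. If $q\equiv1\bmod4$ then $-1$ is a square, so $(-1)^n$ is a square for every $n$ and the non-square property of $f(1)f(-1)$ is inherited by $g$ regardless of parity. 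If $q\equiv3\bmod4$ then $-1$ is a non-square, so the property is inherited exactly when $n$ is even; the even-degree hypothesis guarantees this at the first step, and since $\deg T_R(f)=2n$ is even, every subsequent degree is even and the induction closes. Combining the single-step criterion with this invariant, an induction on $k$ shows every $T_R^k(f)$ is irreducible.

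The conceptual heart — and the place where care is needed — is the interplay between the norm criterion and the sign $(-1)^n$: the norm lemma reduces irreducibility to a clean square condition over $\FF_q$, but the factor $(-1)^n$ arising when evaluating the transform at $\pm1$ is precisely what forces the extra even-degree requirement in the $q\equiv3\bmod4$ case. The remaining points — that $T_R(f)$ is monic of degree exactly $2n$, that $\beta\neq0$, and that $\alpha\neq\pm1$ (which follows from $f(1)f(-1)$ being a non-square, hence nonzero) — are routine and need only be checked once.
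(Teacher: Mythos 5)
Your proof is correct, and essentially complete. One point of context first: the paper itself gives no proof of this statement — it is quoted as a known theorem with the proof deferred to Cohen's paper — so there is no internal argument to compare yours against; the fair comparison is with the paper's general isogeny framework. Your route is the standard elementary one and every step checks out: $T_R(f)$ is monic of degree $2n$ with $T_R(f)(0)=1\neq 0$, so its roots are exactly the solutions of $\beta^2-2\alpha'\beta+1=0$ as $\alpha'$ runs over the conjugates of $\alpha$; irreducibility of $T_R(f)$ is equivalent to $\alpha^2-1$ being a nonsquare in $\FF_{q^n}$; the norm criterion together with the identity $\norm_{\FF_{q^n}/\FF_q}(\alpha^2-1)=f(1)f(-1)$ (valid because $f$ is monic) converts this into the stated hypothesis; and the computation $g(1)g(-1)=(-1)^n(2^n)^2f(1)f(-1)$, using $R(\pm 1)=\pm 1$, propagates the hypothesis along the iteration, the factor $(-1)^n$ accounting precisely for the extra parity assumption when $q\equiv 3\bmod 4$ (and the doubling of degrees makes that assumption self-sustaining). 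It is worth seeing how this sits next to the paper's machinery: $R$ is Möbius-conjugate to the squaring map $t\mapsto t^2$ via $t\mapsto\frac{t+1}{t-1}$, so it is the multiplicative-group analogue of the Lattès maps the paper builds from isogenies (the viewpoint the paper attributes to Bassa and Menares). Your square-class invariant is then the genus-zero incarnation of the paper's key lemma and of Theorem~\ref{th:main}: the Frobenius of $\FF_{q^n}$ permutes the two preimages $\{\beta,1/\beta\}$ of $\alpha$, it swaps them — i.e., acts by translation by the nontrivial element of the kernel of squaring — exactly when $\alpha^2-1$ is a nonsquare, and your condition on $f(1)f(-1)$ guarantees that this swap has full order at every level of the tower, which is the same mechanism as the order condition on the point $F$ in the paper's proof. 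What your approach buys is a self-contained, elementary, if-and-only-if criterion at each step with explicit control of the invariant; what the paper's framework buys is generality: the same Frobenius-translation argument applies to endomorphisms of arbitrary degree, where no closed-form square-class invariant such as $f(1)f(-1)$ is available.
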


Recently there has been interest in constructing transforms $T$ which induce
irreducible families. We note the work of Bassa and Menares using
Galois theory on function fields \cite{bassa-menares-g} and 
using multiplicative group theory \cite{bassa-menares-r}.

In this article we construct such transforms from isogenies of elliptic curves.
Our main results are algorithms which generate a large diversity of transforms.


\section{General framework}

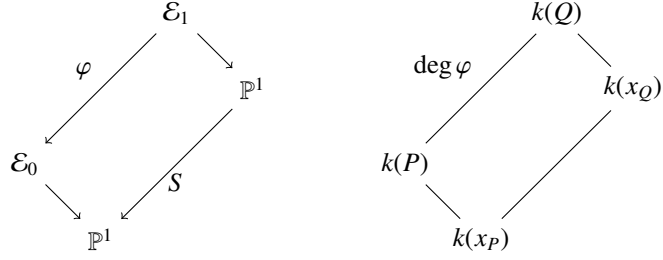
\begin{figure}
\begin{center}
\begin{tikzpicture}
    \node at (0,0) (E0){$\E_0$};
    \node at (2,2) (E1){$\E_1$};
    \node at (1,-1) (P0){$\PP^1$};
    \node at (3,1) (P1){$\PP^1$};
    \draw[->] (E0) -- (P0);
    \draw[->] (E1) -- (P1);
    \draw[<-] (E0) -- (E1) node[midway,above left] {$\varphi$};
    \draw[<-] (P0) -- (P1) node[midway,below] {$S$};

    \node at (5,0) (kP){$k(P)$};
    \node at (7,2) (kQ){$k(Q)$};
    \node at (6,-1) (kPx){$k(x_P)$};
    \node at (8,1) (kQx){$k(x_Q)$};
    \draw (kPx) -- (kP);
    \draw (kPx) -- (kQx);
    \draw (kP) -- (kQ) node[midway,above left] {$\deg\varphi$};
    \draw (kQx) -- (kQ);
\end{tikzpicture}
\end{center}
\caption{
On the left, the projection of an isogeny $\E_0\stackrel{\varphi}{\longleftarrow}\E_1$ to its Lattès map $S$;
on the right, the corresponding field extensions where the points satisfy $P=\varphi(Q)$.}
\label{fig:com-ext}
\end{figure}

We first explain the relationship between the transform $T_S$ and isogenies.
Let $\E_0\stackrel{\varphi}{\longleftarrow}\E_1$ be an isogeny of elliptic
curves in Weierstrass form defined over a finite field $k$. Consider two points
$P\in\E_0(\overline k)$ and $Q\in\E_1(\overline k)$ satisfying $P=\varphi(Q)$
and such that $[k(Q):k(P)]=\deg\varphi$. Since $\varphi$ commutes with the
involution endomorphism $(x,y)\mapsto (x,-y)$, quotienting out by it yields the
commutative diagram on the left of Figure~\ref{fig:com-ext} where the arrows to
the projective line are the projections of points on their $x$-coordinate and
where $S$ denotes the $x$-coordinate map of the isogeny $\varphi$.

This induces the field extensions diagram on the right of
Figure~\ref{fig:com-ext}. If either $\deg\varphi$ is odd or $[k(P):k(x_P)]=2$,
this implies $[k(x_Q):k(x_P)]=\deg\varphi$. Consequently, denoting by $f(x)\in
k[x]$ the minimal polynomial of $x_P$ over $k$, since $f(S(x_Q))=f(x_P)=0$, the
minimal polynomial of $x_Q$ over $k$ is $T_S(f(x))$ and the latter is therefore
irreducible.

To iterate this construction, we require a criteria on the isogeny $\varphi$
which ensures that the condition $[k(Q):k(P)]=\deg\varphi$ holds under further
compositions by $\varphi$. We begin with a simple but key lemma which describes
the action of the Frobenius endomorphism in explicit terms.

\begin{lemma}
Let $\E_0$ and $\E_1$ be elliptic curves and
$\E_0\stackrel{\varphi}{\longleftarrow}\E_1$ be a separable isogeny defined
over a finite field $k$. Fix a point $P\in\E_0(\overline k)$ and denote by
$\pi$ the $k(P)$-Frobenius endomorphism on $\E_1$. If all points in the kernel
of $\varphi$ are defined over $k(P)$, then there exists a point
$F\in\ker\varphi$ such that, for all points $Q\in \varphi^{-1}(P)$ we have
$\pi^n(Q)=Q+nF$ for all $n\in\NN$.
\end{lemma}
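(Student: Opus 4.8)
The plan is to exploit two facts: that $\varphi$, being defined over $k$ and hence over $k(P)$, commutes with the Frobenius, and that the hypothesis on $\ker\varphi$ says precisely that $\pi$ acts as the identity on $\ker\varphi$. First I would introduce the $k(P)$-Frobenius endomorphism $\pi_0$ on $\E_0$ alongside $\pi$ on $\E_1$. Since $\varphi$ is defined over $k\subseteq k(P)$, the two Frobenius morphisms are compatible, so that $\varphi\circ\pi=\pi_0\circ\varphi$. Moreover $P$ is by construction rational over $k(P)$, whence $\pi_0(P)=P$.

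Next, fix any $Q\in\varphi^{-1}(P)$. Applying the commutation relation gives $\varphi(\pi(Q))=\pi_0(\varphi(Q))=\pi_0(P)=P$, so $\pi(Q)$ is again a preimage of $P$. Because $\varphi$ is a group homomorphism, the difference $F_Q:=\pi(Q)-Q$ then lies in $\ker\varphi$. Invoking the hypothesis, every point of $\ker\varphi$ is $k(P)$-rational and therefore fixed by $\pi$; in particular $\pi(F_Q)=F_Q$. The iterate formula now follows by induction on $n$: assuming $\pi^n(Q)=Q+nF_Q$, one applies $\pi$ and uses that $\pi$ is a homomorphism fixing $F_Q$ to obtain $\pi^{n+1}(Q)=\pi(Q)+nF_Q=Q+(n+1)F_Q$.

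It remains to check that $F_Q$ is independent of the chosen preimage. Any two preimages $Q,Q'$ of $P$ differ by some $G\in\ker\varphi$, and since $\pi(G)=G$ one computes $F_{Q'}=\pi(Q')-Q'=\pi(Q)+G-(Q+G)=\pi(Q)-Q=F_Q$. Setting $F:=F_Q$ yields a single point with the desired property for all $Q$. The only delicate point is the compatibility $\varphi\circ\pi=\pi_0\circ\varphi$ together with the reformulation of the hypothesis as ``$\pi$ fixes $\ker\varphi$ pointwise''; once these are in place the argument is formal manipulation inside the group $\E_1(\overline k)$. Separability of $\varphi$ enters only to ensure that $\varphi^{-1}(P)$ is a nonempty torsor under $\ker\varphi$, so that the statement is meaningful for every preimage $Q$.
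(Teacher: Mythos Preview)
Your proof is correct and follows essentially the same route as the paper: establish $\pi(Q)-Q\in\ker\varphi$ via commutation of Frobenius with $\varphi$, use rationality of $\ker\varphi$ to show this difference is independent of the chosen preimage, and conclude the iterate formula by induction using $\pi(F)=F$. Your version is in fact slightly more careful in distinguishing the Frobenius $\pi_0$ on $\E_0$ from $\pi$ on $\E_1$, and in noting explicitly where separability is used.
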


\begin{proof}
Consider a point $Q\in\varphi^{-1}(P)$. The inverse image of $P$ by $\varphi$
can then be written as $\varphi^{-1}(P)=\{Q+R : R\in\ker\varphi\}$; in
particular, all points in the fiber have the same field of definition. Since
$\varphi(\pi(Q))=\pi(\varphi(Q))=\pi(P)=P$, there exists $F\in\ker \varphi$
such that $\pi(Q)=Q+F$. All other points $Q'\in \varphi^{-1}(P)$, being of the
form $Q'=Q+R$, also satisfy $\pi(Q')=\pi(Q+R)=\pi(Q)+\pi(R)=Q+F+R=Q'+F$.
Finally, since $F\in\E_1(k(P))$, we obtain
\[
\pi^n(Q)=\pi^{n-1}(Q+F)=\pi^{n-1}(Q)+\pi(F)=\pi^{n-1}(Q)+F=\cdots=Q+nF.
\]
\end{proof}

We deduce the theorem below which gives precisely the criteria we required.

\begin{theorem}
\label{th:main}
Let
$\E_0\stackrel{\varphi_0}{\longleftarrow}\E_1\stackrel{\varphi_1}{\longleftarrow}\E_2$
be two separable isogenies of respective degree $\ell_0$ and $\ell_1$ defined
over a finite field $k$. Suppose that all prime factors of $\ell_1$ divide
$\ell_0$. Fix a point $P\in\E_0(\overline k)$ and assume that the kernel
$\ker(\varphi_0\circ\varphi_1)$ is cyclic and that all its points are
$k(P)$-rational. Then, all points $Q\in(\varphi_0\circ\varphi_1)^{-1}(P)$
satisfying $\left[k(\varphi_1(Q)):k(P)\right]=\ell_0$ also satisfy
$\left[k(Q):k(P)\right]=\ell_0\ell_1$.
\end{theorem}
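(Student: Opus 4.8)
The plan is to translate every field‑degree condition into a statement about the order of a point in the cyclic group $G=\ker(\varphi_0\circ\varphi_1)$, and then to close with an elementary divisibility argument in which the hypotheses on $\ell_0$ and $\ell_1$ are used decisively.

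First I would apply the preceding lemma to the composite isogeny $\varphi_0\circ\varphi_1\colon\E_2\to\E_0$ with base point $P$. Since the composite is separable of degree $\ell_0\ell_1$, its kernel $G$ is a group of order $\ell_0\ell_1$, cyclic by hypothesis, and all of its points are $k(P)$-rational; hence the lemma furnishes a point $F\in G$ with $\pi^n(Q)=Q+nF$ for the $k(P)$-Frobenius $\pi$ on $\E_2$. Because $k(Q)\supseteq k(P)$ is an extension of finite fields, its degree equals the length of the Frobenius orbit of $Q$, that is, the least $n\ge 1$ with $\pi^n(Q)=Q$; by the lemma this is precisely $\ord(F)$. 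It therefore suffices to show that $F$ generates $G$.

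Next I would feed in the hypothesis $[k(\varphi_1(Q)):k(P)]=\ell_0$. Restricting $\varphi_1$ to $G$ gives a surjection $G\to\ker\varphi_0$ with kernel $\ker\varphi_1$, so $\ker\varphi_0$ consists of $k(P)$-rational points (as $\varphi_1$ is defined over $k$) and the lemma applies equally to $\varphi_0\colon\E_1\to\E_0$, yielding $F_0\in\ker\varphi_0$ with $\pi^n(\varphi_1(Q))=\varphi_1(Q)+nF_0$. Comparing the two relations through $\varphi_1(\pi(Q))=\pi(\varphi_1(Q))$ gives $\varphi_1(F)=F_0$, and the same orbit-length argument shows $\ord(F_0)=\ell_0$. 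Consequently the image of $F$ in $G/\ker\varphi_1\cong\ker\varphi_0$ has order exactly $\ell_0$.

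The crux is then purely arithmetic. Writing $G\cong\ZZ/\ell_0\ell_1\ZZ$ with $F\leftrightarrow a$, the subgroup $\ker\varphi_1$ corresponds to the multiples of $\ell_0$, so the previous step says $\gcd(a,\ell_0)=1$, whereas generating $G$ amounts to $\gcd(a,\ell_0\ell_1)=1$, i.e. that $a$ avoids every prime dividing $\ell_0\ell_1$. This is exactly where the assumption that every prime factor of $\ell_1$ already divides $\ell_0$ is indispensable: it forces the set of primes dividing $\ell_0\ell_1$ to coincide with that of $\ell_0$, so coprimality to $\ell_0$ upgrades to coprimality to $\ell_0\ell_1$, giving $\ord(F)=\ell_0\ell_1$ and hence $[k(Q):k(P)]=\ell_0\ell_1$. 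I expect this final implication to be the only genuinely delicate point: without the divisibility hypothesis a prime of $\ell_1$ not dividing $\ell_0$ would be left unconstrained and $F$ could fail to have full order, so the argument must invoke it essentially rather than incidentally.
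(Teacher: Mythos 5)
Your proof is correct and follows essentially the same route as the paper's: both apply the preceding lemma to the composite isogeny to get $F\in G$ with $[k(Q):k(P)]=\ord(F)$ and $[k(\varphi_1(Q)):k(P)]=\ord(\varphi_1(F))$, identify $\ker\varphi_1$ with the unique order-$\ell_1$ subgroup $\ell_0\cdot G$ of the cyclic group $G$, and use the hypothesis that every prime factor of $\ell_1$ divides $\ell_0$ to promote $\ord(\varphi_1(F))=\ell_0$ to $\ord(F)=\ell_0\ell_1$. The differences are only presentational: you close with a gcd computation under an explicit isomorphism $G\cong\ZZ/\ell_0\ell_1\ZZ$ where the paper argues by contradiction (writing $F=p\cdot T$), and you obtain $\varphi_1(F)$ via a second application of the lemma to $\varphi_0$ rather than by direct computation.
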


\begin{proof}
Since $G=\ker(\varphi_0\circ\varphi_1)$ is a cyclic subgroup of $\E_2(k(P))$ of
order $\ell_0\ell_1$, it admits a unique subgroup of order $\ell_1$, namely
$\ell_0\cdot G$, which by uniqueness is equal to $\ker\varphi_1$. Denote by
$\pi$ the $k(P)$-Frobenius endomorphism on $\E_2$. By the lemma, there exists a
point $F\in G$ satisfying $\pi^n(Q)=Q+nF$. In particular, its order is
$\ord(F)=[k(Q):k(P)]$. Since
$\pi(\varphi_1(Q))=\varphi_1(\pi(Q))=\varphi_1(Q+F)=\varphi_1(Q)+\varphi_1(F)$,
we similarly have $[k(\varphi_1(Q)):k(P)]=\ord(\varphi_1(F))$.

Assume now $[k(\varphi_1(Q)):k(P)]=\ell_0$, that is,
$\ord(\varphi_1(F))=\ell_0$. We claim $\ord(F)=\ell_0\ell_1$. Suppose otherwise
that $\ord(F)<\ell_0\ell_1$. Then, we can write $F=p\cdot T$ for some $T\in G$
and some prime $p$ dividing $\ell_0\ell_1$. As all prime divisors of $\ell_1$
are divisors of $\ell_0$, we have $p\mid\ell_0$. This implies $\ell_0/p\cdot
F=\ell_0\cdot T\in {\ell_0}\cdot G=\ker \varphi_1$, that is, $\ell_0/p\cdot
\varphi_1(F)={\mathcal O}_{\E_1}$, which contradicts $\ord
(\varphi(F))=\ell_0$. We thus obtain $\ord (F)=\ell_0 \ell_1=
\left[k(Q):k(P)\right]$ as claimed.
\end{proof}

Note that the simplest setting where this result can be iterated is when
$\E_0=\E_1=\E_2$ and the endomorphisms $\varphi_0$ and $\varphi_1$ are
identical. This yields the following corollary where we assume that
$\deg\varphi$ is odd for simplicity.

\begin{corollary}
\label{cor:iter}
Let $\E$ be an elliptic curve, $\varphi:\E\to\E$ a separable endomorphism of
odd degree defined over a finite field $k$, and $P\in\E(\overline k)$ a point.
Suppose that the subgroup $\ker(\varphi\circ\varphi)$ is cyclic and that all its points
are $k(P)$-rational. Denote by $S$ the $x$-coordinate map of $\varphi$ and by
$f$ the minimal polynomial of $x_P$ over $k$. Then, if $T_S(f)$ is
irreducible, so are all polynomials in the family $I_S(f)$.
\end{corollary}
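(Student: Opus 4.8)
The plan is to build a tower of preimages and run an induction whose engine is Theorem~\ref{th:main}. I fix $P_0=P$ and, for each $j\geq 1$, choose some $P_j\in\varphi^{-1}(P_{j-1})$, so that $\varphi(P_j)=P_{j-1}$ and hence $S(x_{P_j})=x_{P_{j-1}}$. By the Lemma all points of a given fiber share a field of definition, so these choices are immaterial for degree computations. Since $f(x_{P_0})=0$ and $S(x_{P_j})=x_{P_{j-1}}$, an immediate induction shows $x_{P_j}$ is a root of $T_S^j(f)$, and $\deg T_S^j(f)=\ell^j\deg f$ where $\ell=\deg\varphi$; thus proving that every $T_S^j(f)$ is irreducible is equivalent to proving $[k(x_{P_j}):k(x_{P_{j-1}})]=\ell$ for all $j\geq 1$. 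Because $\ell$ is odd, the framework of Section~2 lets me pass between this and the point-degree statement $[k(P_j):k(P_{j-1})]=\ell$, which is exactly the quantity Theorem~\ref{th:main} controls. So the corollary reduces to showing $[k(P_j):k(P_{j-1})]=\ell$ for every $j\geq 1$, which I prove by induction on $j$.

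For the base case $j=1$, I translate the hypothesis that $T_S(f)$ is irreducible, which gives $[k(x_{P_1}):k(x_{P_0})]=\ell$ directly. To upgrade this to $[k(P_1):k(P_0)]=\ell$, I invoke the Lemma: writing $\pi$ for the $k(P_0)$-Frobenius, there is $F\in\ker\varphi$ with $[k(P_1):k(P_0)]=\ord(F)$, and $\ord(F)$ divides $\ell$ since $\ker\varphi=\ell\cdot\ker(\varphi\circ\varphi)$ is cyclic of order $\ell$ (with all points $k(P_0)$-rational by hypothesis, as $\ker\varphi\subseteq\ker(\varphi\circ\varphi)$). Comparing the two towers $k(x_{P_0})\subseteq k(x_{P_1})\subseteq k(P_1)$ and $k(x_{P_0})\subseteq k(P_0)\subseteq k(P_1)$, in which the vertical indices $\epsilon_i=[k(P_i):k(x_{P_i})]$ lie in $\{1,2\}$, multiplicativity gives $\ord(F)\cdot\epsilon_0=\epsilon_1\cdot\ell$; since $\ord(F)$ must divide the odd number $\ell$, a short divisibility argument forces $\epsilon_0=\epsilon_1$ and $\ord(F)=\ell$.

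For the inductive step, assuming $[k(P_j):k(P_{j-1})]=\ell$, I apply Theorem~\ref{th:main} with base point $P_{j-1}$, taking $\varphi_0=\varphi_1=\varphi$ (so $\ell_0=\ell_1=\ell$ and the condition on prime factors is automatic) and $Q=P_{j+1}$, noting $\varphi_1(Q)=P_j$ and $(\varphi_0\circ\varphi_1)(Q)=P_{j-1}$. The theorem's hypotheses persist along the tower: $\ker(\varphi\circ\varphi)$ is cyclic by assumption, and its points remain rational over $k(P_{j-1})$ because $k(P_0)\subseteq k(P_{j-1})$. As the input condition $[k(\varphi_1(Q)):k(P_{j-1})]=[k(P_j):k(P_{j-1})]=\ell$ is precisely the induction hypothesis, the theorem yields $[k(P_{j+1}):k(P_{j-1})]=\ell^2$, whence $[k(P_{j+1}):k(P_j)]=\ell$ by multiplicativity. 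This closes the induction, and translating back to $x$-coordinates proves that all $T_S^j(f)$ are irreducible.

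I expect the main obstacle to be the bookkeeping that keeps Theorem~\ref{th:main} applicable at each stage — in particular verifying that its rationality hypothesis survives the iteration, which it does only because the fields of definition grow along the tower — together with the base-case parity argument that converts the given irreducibility of $T_S(f)$ into the point-degree equality the induction needs. Once the tower of preimages is in place, the remaining steps are routine.
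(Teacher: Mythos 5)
Your proof is correct and is essentially the paper's intended argument: the paper states the corollary as the direct iteration of Theorem~\ref{th:main} in the special case $\E_0=\E_1=\E_2$ and $\varphi_0=\varphi_1=\varphi$, leaving the bookkeeping implicit, and your tower of preimages $P_j\in\varphi^{-1}(P_{j-1})$ with induction on $[k(P_j):k(P_{j-1})]=\ell$ is exactly that iteration spelled out. The details you supply --- the base-case divisibility/parity argument converting irreducibility of $T_S(f)$ into the point-degree equality via the Lemma, the persistence of the rationality hypothesis along $k(P_0)\subseteq k(P_{j-1})$, and the translation back to $x$-coordinates using that $\deg\varphi$ is odd --- are all sound and fill in precisely what the paper omits.
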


The map $S$ is what is known as a Lattès map \cite{lattes-maps}: it is the
projection of an endomorphism $\varphi:\E\to\E$ through a finite separable
cover $\E\to\PP^1$ (in this case, the projection on the $x$-coordinate).

Note that the condition on $\ker(\varphi\circ\varphi)$ being cyclic is
equivalent to no isogeny factor of $\varphi$ being dual to another. In the
particular case where $\varphi$ has prime degree, it reduces to $\varphi$ not
being its own dual.



\subsection{Möbius transforms}

For any matrix $m\in\GL_2(\ZZ)$, define the rational fraction
\[
M_m(x)=\frac{\alpha x+\beta}{\gamma x+\delta},
\qquad
\text{where~}
m=\begin{pmatrix}\alpha & \beta \\ \gamma & \delta\end{pmatrix}
.
\]
If $S$ is a rational fraction in $\QQ(x)$, we define the corresponding Möbius
transform of $S$ as the composition $S'=M_{m^{-1}}\circ S\circ M_m$. Note that
the fraction $S$ induces an irreducible family from a given polynomial $f$ if
and only if $S'$ does. Thus we may apply Möbius transforms to any rational
fraction while preserving its ability to induce irreducible families, for
instance in order to try and reduce the size of its coefficients.

Our efforts will from now on be focused on finding isogenies $\varphi:\E\to\E$
which satisfy the conditions of Corollary~\ref{cor:iter} and obtaining the
corresponding rational fractions $S$; we will purposely not look for associated
points $P$ and polynomials $f$. Nevertheless, in Section~\ref{sec:density}, we
will compute for each selected rational fraction $S$, the density of
irreducible polynomials of a given degree in a given finite field for which $S$
induces irreducible families.


\section{The Verschiebung endomorphism}

Let $\varphi:\E\to\E$ be a separable endomorphism of prime degree $\ell$
defined over a finite field $\FF_q$. In this section we consider the case where
$\ell$ divides $q$. Since the multiplication-by-$q$ map satisfies
$[q]=\pi\widehat\pi$, the endomorphism is either the Frobenius $\pi$, which is
purely inseparable, or its dual, the Verschiebung $\widehat\pi$, which is
separable if and only if the elliptic curve $\E$ is ordinary.

We may thus specialize Corollary~\ref{cor:iter} to the case where $q$ is an odd
prime and $\varphi=\widehat\pi$.

\begin{proposition}
Let $\varphi_0:E_1\to E_0$ be a separable isogeny of odd degree $\ell_0\neq p$
defined over a finite field $\FF_p$ with $p\neq 2$. Suppose the subgroup
$\ker(\varphi_0\circ\widehat\pi)$ is cyclic and all its points are rational.
Denote by $S$ the $x$-coordinate map of the $\widehat\pi$ and by $f$ the kernel
polynomial of $\varphi_0$. Then, if $f$ is irreducible, $S$ induces an
irreducible family from $f$.
\end{proposition}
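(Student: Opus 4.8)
The plan is to deduce the proposition from Corollary~\ref{cor:iter} applied to the single curve $E_1$ and the endomorphism $\varphi=\widehat\pi$, whose $x$-coordinate map is $S$. Fix a point $P\in E_1(\overline{\FF_p})$ with $x_P$ a root of $f$. Since $f$ is the kernel polynomial of $\varphi_0$, this means $P\in\ker\varphi_0$, so any preimage $Q\in\widehat\pi^{-1}(P)$ satisfies $\varphi_0(\widehat\pi(Q))=\varphi_0(P)=\O_{E_0}$ and hence $Q\in\ker(\varphi_0\circ\widehat\pi)$. The hypothesis that this last group is cyclic of order $\ell_0 p$ (with $\gcd(\ell_0,p)=1$, as $\ell_0\neq p$) forces $\widehat\pi$ to be separable with $\ker\widehat\pi\cong\ZZ/p\ZZ$; in particular $E_1$ is ordinary and $\widehat\pi$ has odd degree $p$. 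It therefore remains to (i) establish the base case that $T_S(f)$ is irreducible and (ii) verify that $\ker(\widehat\pi\circ\widehat\pi)$ is cyclic and $k(P)$-rational, where $k=\FF_p$; Corollary~\ref{cor:iter} then yields irreducibility of every term of $I_S(f)$.

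For the base case I would argue as in the proof of Theorem~\ref{th:main}. Since $\ker\widehat\pi$ is the unique subgroup of order $p$ of the cyclic group $\ker(\varphi_0\circ\widehat\pi)$, all of its points are $k(P)$-rational, so the lemma applies to $\widehat\pi$ over the base field $k(P)$ and produces an element $F\in\ker\widehat\pi$ with $[k(Q):k(P)]=\ord(F)$. The key point is that $\ord(F)=p$, i.e. $F\neq\O_{E_1}$: the cyclicity of $\ker(\varphi_0\circ\widehat\pi)$ should rule out the degenerate possibility that $Q$ is already $k(P)$-rational, exactly as cyclicity is used in Theorem~\ref{th:main} to force $F$ to have maximal order. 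With $\ord(F)=p=\deg\widehat\pi$ and $p$ odd, the commutative diagram of Figure~\ref{fig:com-ext} gives $[k(x_Q):k(x_P)]=p$, so $T_S(f)$ is the minimal polynomial of $x_Q$ over $k$ and is therefore irreducible.

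To run the iteration I would verify the two hypotheses of Corollary~\ref{cor:iter} for $\varphi=\widehat\pi$. Cyclicity of $\ker(\widehat\pi\circ\widehat\pi)$ is automatic, since on an ordinary curve the separable $p$-power torsion is cyclic and $\ker\widehat\pi^2\cong\ZZ/p^2\ZZ$. The delicate point is the $k(P)$-rationality of $\ker\widehat\pi^2$: the hypothesis directly controls only the order-$p$ layer $\ker\widehat\pi$, whereas here one needs rationality one level higher. This is where the defining relation $\pi\widehat\pi=[p]$ of the Verschiebung must enter: because $\pi$ is purely inseparable it does not enlarge fields of definition, so the action of the $k(P)$-Frobenius on $E_1[p^2]$ is pinned down by its action on $\ker\widehat\pi$, and I would leverage the $k(P)$-rationality of $\ker(\varphi_0\circ\widehat\pi)\supseteq\ker\widehat\pi$ to propagate rationality up to $\ker\widehat\pi^2$. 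Granting this, Corollary~\ref{cor:iter} applies and $S$ induces an irreducible family from $f$.

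The hard part will be this last transfer of rationality from $\ker\widehat\pi$ to $\ker\widehat\pi^2$, together with the accompanying base-case claim $\ord(F)=p$; both are genuinely arithmetic and rest on exploiting $\pi\widehat\pi=[p]$ and the inseparability of $\pi$, rather than on the purely group-theoretic counting that sufficed in Theorem~\ref{th:main}.
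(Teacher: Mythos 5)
Your plan --- reduce to Corollary~\ref{cor:iter} with $\varphi=\widehat\pi$, then supply the base case and the $k(P)$-rationality of $\ker(\widehat\pi\circ\widehat\pi)$ --- is exactly what the paper intends: the paper gives no proof at all beyond the sentence announcing the specialization. But carrying the plan out runs into a contradiction that your own write-up sets up. In your first paragraph you show that every $Q\in\widehat\pi^{-1}(P)$ lies in $\ker(\varphi_0\circ\widehat\pi)$; in your second paragraph you use the rationality hypothesis in the form ``points of (subgroups of) $\ker(\varphi_0\circ\widehat\pi)$ are $k(P)$-rational'', which is the only reading under which the Lemma can be invoked over $k(P)$. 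These two facts together already force $Q\in E_1(k(P))$, that is $F=\mathcal{O}_{E_1}$ and $[k(Q):k(P)]=1$. So the ``degenerate possibility'' you hope cyclicity will exclude is not degenerate: it is \emph{forced} by the hypotheses. Cyclicity cannot rescue this; in Theorem~\ref{th:main} cyclicity only has force in combination with the assumption $[k(\varphi_1(Q)):k(P)]=\ell_0>1$, which is unavailable here because $\widehat\pi(Q)=P$ is $k(P)$-rational by construction. The structural difference is that in Theorem~\ref{th:main} the fiber over $P$ is disjoint from the kernel whose points are assumed rational, whereas here $P\in\ker\varphi_0$ places the fiber inside that very kernel.

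Moreover the gap is not repairable, because the statement fails for reasons independent of how ``rational'' is read: the proposition sits precisely outside the divisibility condition of Theorem~\ref{th:main}. Irreducibility of $f$ plus cyclicity of the kernel force $\ell_0$ to be prime (Galois and $[-1]$ preserve the order of torsion points), hence coprime to $p$ since $\ell_0\neq p$; but Theorem~\ref{th:main} needs every prime factor of $\ell_1=\deg\widehat\pi=p$ to divide $\ell_0$. Concretely, since $\widehat\pi$ is invertible on $E_1[\ell_0]$, the subgroup $D=\widehat\pi^{-1}(\ker\varphi_0)\cap E_1[\ell_0]$ is Frobenius-stable, cyclic of order $\ell_0$, and $\widehat\pi:D\to\ker\varphi_0$ is an isomorphism commuting with Frobenius; hence the nonzero points of $D$ are roots of $T_S(f)$ whose degree over $\FF_p$ is exactly $\deg f$, while $T_S(f)$ also has the roots coming from points with nontrivial $p$-part, so it has more than $\deg f$ distinct roots. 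An irreducible polynomial over a finite field has all roots of the same degree, so $T_S(f)$ is \emph{never} irreducible in this setting. (A concrete instance satisfying every hypothesis: $p=5$, $E_1$ ordinary with Frobenius trace $1$, $\ell_0=7$, $\ker\varphi_0$ the eigenvalue-$2$ eigenspace of Frobenius in $E_1[7]$; then $f$ is an irreducible cubic, all of $\ker(\varphi_0\circ\widehat\pi)$ is $\FF_{5^3}$-rational, yet $T_S(f)$ splits into five cubics.) The Verschiebung iteration genuinely produces irreducible families only when the roots of the starting polynomial are $x$-coordinates of $p$-power torsion points, i.e.\ the case $\ell_0=p$ that the proposition excludes and that the paper's own examples realize. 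Your sketched step (ii) is unsound for the same reason: Frobenius acts on $\ker\widehat\pi^2\cong\ZZ/p^2\ZZ$ through a unit $\mu$, and $\mu^m\equiv1\pmod p$ does not imply $\mu^m\equiv1\pmod{p^2}$, nor can the relation $\pi\widehat\pi=[p]$ force it. So what you have exposed is a defect in the printed statement itself, not a detail your argument merely left unfinished.
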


In order to compute the $x$-coordinate of the Verschiebung endomorphism on an
elliptic curve $\E$, we use Algorithm~\ref{alg:verschiebung}.

\begin{algorithm}
\begin{center}
\begin{tabular}{rl}
\textsc{Input:} & An elliptic curve $\E$ defined over a finite field $\FF_q$. \\
\textsc{Output:} & The $x$-coordinate map of the Verschiebung endomorphism. \medskip \\
1. & Compute the division polynomial $\varphi_q(x)$ \\
   & for the multiplication-by-$q$ map on $\E$. \\
2. & Return $\varphi_q(x^{1/q})$.
\end{tabular}
\end{center}
\caption{Computing the Lattès map of the Verschiebung endomorphism of an elliptic curve defined over a finite field.}
\label{alg:verschiebung}
\end{algorithm}


We have computed all the rational fractions obtained using this algorithm,
including by composing with the Möbius map. Table~\ref{tbl:verschiebung} gives,
for small powers of two (even though Theorem~\ref{th:main} does not apply to
them, we find they still induce irreducible families; see
section~\ref{sec:density}) and small odd primes $q$, the number $N$ of such
transforms, and a representative element selected for having lowest Hamming
weight.

\begin{table}
\begin{center}
\begin{tabular}{r|r|l}
$q$ & $N$ & \textsc{representative fraction} \\
\hline
\hline
$2$ & $6$ & $x/(x^2 + 1)$ \\
$4$ & $180$ & $(x^4 + x^2 + 1)/(x^3 + x)$ \\
$8$ & $3528$ & $(x^7 + x)/(x^8 + x^6 + x^4 + x^2 + 1)$ \\
\hline
$3$ & $36$ & $(x^3 + x^2 + x + 2)/x^2$ \\
$5$ & $345$ & $(2x^5 + x)/(x^4 + 2)$ \\
$7$ & $1428$ & $(5x^7 + x^4 + 6x)/(x^6 + x^3 + 3)$ \\
$11$ & $8250$ & $(8x^{11} + x^9 + 7x^7 + 4x^3 + 10x)/(x^{10} + x^8 + 2x^4 + 7x^2 + 8)$ \\
\end{tabular}
\end{center}
\caption{Some rational fractions which induce irreducible families, computed as Lattès maps of Verschiebung endomorphisms.}
\label{tbl:verschiebung}
\end{table}

Note that for $q=2$ this method yields the well-known $Q$-transform.


\section{Isogenies of ordinary curves over finite fields}

Let $\E$ be an ordinary elliptic curve defined over a finite field $\FF_q$ and
denote by $\pi$ its Frobenius endomorphism. Its endomorphism ring $\End(\E)$ is
an order in the imaginary quadratic field $K=\QQ(\pi)$ containing $\ZZ[\pi]$.
Isogenies $\varphi:\E\to\E'$ of prime degree $\ell\nmid q$ fall into one of
two categories:
\begin{enumerate}
\item So-called \emph{horizontal} isogenies satisfy $\End(\E)=\End(\E')$ and
	are described by the theory of complex multiplication
	\cite{shimura-taniyama} which states that the ideal class group
	$\cl(\O)$ acts faithfully and transitively on the set of isomorphism
	classes of elliptic curves $\E$ satisfying $\End(\E)\simeq\O$.
\item Other prime-degree isogenies are said to be \emph{vertical} and display
	the so-called volcano structure \cite{kohel-phd,fouquet-morain}.
\end{enumerate}

\begin{figure}
\begin{center}
\begin{tikzpicture}[nodes={circle, fill, scale=0.7}, thick]
\node[fill=none,circle=none] (A) {};
\node (B0) [left of=A] {};
\node (B1) [above of=A] {};
\node (B2) [right of=A] {};
\node (B3) [below of=A] {};
\node (C0) [left of=B0] {};
\node (C1) [above of=B1] {};
\node (C2) [right of=B2] {};
\node (C3) [below of=B3] {};
\node[fill=none,circle=none] (D0) [left of=C0] {};
\node[fill=none,circle=none] (D1) [above of=C1] {};
\node[fill=none,circle=none] (D2) [right of=C2] {};
\node[fill=none,circle=none] (D3) [below of=C3] {};
\node (E0) [below of=D0] {};
\node (E1) [above of=D0] {};
\node (E2) [left of=D1] {};
\node (E3) [right of=D1] {};
\node (E4) [above of=D2] {};
\node (E5) [below of=D2] {};
\node (E6) [right of=D3] {};
\node (E7) [left of=D3] {};
\node (F0) [below of=E0] {};
\node (F1) [left of=E0] {};
\node (F2) [left of=E1] {};
\node (F3) [above of=E1] {};
\node (F4) [left of=E2] {};
\node (F5) [above of=E2] {};
\node (F6) [above of=E3] {};
\node (F7) [right of=E3] {};
\node (F8) [above of=E4] {};
\node (F9) [right of=E4] {};
\node (F10) [right of=E5] {};
\node (F11) [below of=E5] {};
\node (F12) [right of=E6] {};
\node (F13) [below of=E6] {};
\node (F14) [below of=E7] {};
\node (F15) [left of=E7] {};
\draw (B0) -- (B1) -- (B2) -- (B3) -- (B0);
\draw (B0) -- (C0);
\draw (B1) -- (C1);
\draw (B2) -- (C2);
\draw (B3) -- (C3);
\draw (C0) -- (E0);
\draw (C0) -- (E1);
\draw (C1) -- (E2);
\draw (C1) -- (E3);
\draw (C2) -- (E4);
\draw (C2) -- (E5);
\draw (C3) -- (E6);
\draw (C3) -- (E7);
\draw (E0) -- (F0);
\draw (E0) -- (F1);
\draw (E1) -- (F2);
\draw (E1) -- (F3);
\draw (E2) -- (F4);
\draw (E2) -- (F5);
\draw (E3) -- (F6);
\draw (E3) -- (F7);
\draw (E4) -- (F8);
\draw (E4) -- (F9);
\draw (E5) -- (F10);
\draw (E5) -- (F11);
\draw (E6) -- (F12);
\draw (E6) -- (F13);
\draw (E7) -- (F14);
\draw (E7) -- (F15);
\end{tikzpicture}
\end{center}
\caption{A connected component of a degree-$2$ isogeny graph displaying the so-called volcano structure;
the order in the class group of both primes of norm two is four (the length of the rim) and
the conductor $[\O_K:\ZZ[\pi]]$ has valuation three at two (the height of the trees).}
\label{fig:volcano}
\end{figure}
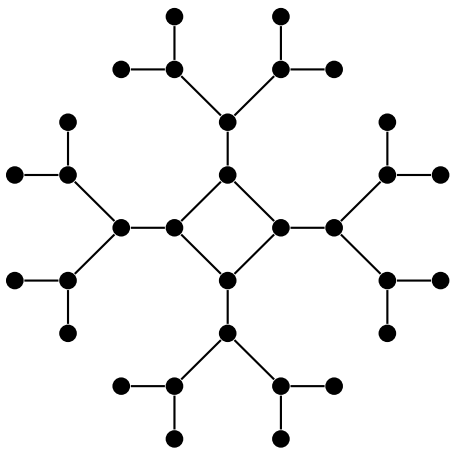

Connected components of degree-$\ell$ isogeny graphs thus have the shape
illustrated by Figure~\ref{fig:volcano}: elliptic curves with locally maximal
endomorphism ring are connected by horizontal isogenies which form a cycle (the
rim of the volcano) of length the order in the class group of an ideal of norm
$\ell$; other elliptic curves are located on trees formed of vertical isogenies
hanging from maximal curves; the graph is regular of degree $\ell+1$ except at
the leaves.

The goal of this section is to exploit this structure in order to construct
endomorphisms satisfying Corollary~\ref{cor:iter}.

\subsection{Prime isogenies of order one}

We take an ordinary elliptic curve $\E$ over a finite field $\FF_q$. We look
for a rational, cyclic endomorphism of $\E$ which is not its own dual. To
achieve this, we look for an isogeny of prime degree $\ell$ which splits into
primes of order one in the class group of $\End(\E)$. By complex multiplication
theory, such an isogeny maps $\E$ to an isomorphic curve.

Concretely, given a prime power $q$ and a prime $\ell$, we look for small
discriminants $\Delta$ for which $\ell$ splits into primes of order one in the
class group of $\QQ(\sqrt\Delta)$ and then use the Hilbert class polynomial
$H_{\Delta}$ to generate elliptic curves over $\FF_q$ with endomorphism algebra
$\QQ(\sqrt\Delta)$. We then compute the corresponding degree-$\ell$ isogeny and
extract its $x$-coordinate.

This yields Table~\ref{tbl:princip} where, as before, we select the lowest
Hamming-weight representative for each rational fraction $S$ under the action
by Möbius transforms.

\begin{table}
\begin{center}
\begin{tabular}{r|r|l}
$q$ & $\ell$ & \textsc{representative fraction} \\
\hline
\hline
$2$ & $3$ & $(x^3 + 1)/x^2$ \\
$5$ & $3$ & $x/(x^3 + x^2 + 1)$ \\
$7$ & $5$ & $(x^5 + x^4 + x^3 + 6x^2 + x)/(x^4 + x^3 + 4x^2 + x + 1)$ \\
$11$ & $2$ & $x/(x^2 + 1)$ \\
$11$ & $5$ & $(x^5 + 9x^4 + 10x^3 + 4x + 1)/(x^5 + x^3 + 9)$ \\
$17$ & $5$ & $(15x^5 + 3x^3 + x)/(x^5 + 3x^4 + 15x^3 + x^2 + 1)$ \\
\end{tabular}
\end{center}
\caption{Some rational fractions which induce irreducible families, computed as cyclic endomorphisms of prime degree.}
\label{tbl:princip}
\end{table}


\subsection{Multiple prime isogenies}

Let $\E$ be an ordinary elliptic curve over a finite field $\FF_q$. An
endomorphism of $\E$ may be constructed as the composition of multiple
horizontal isogenies forming a cycle in the isogeny graph. Equivalently, one
may search for products of prime ideals which are principal in the class groupe
of $\End(\E)$ and then construct the corresponding isogeny cycle through the
theory complex multiplication.

Here, we simply search for such endomorphisms, select those with cyclic kernel
and small degree, and apply Möbius transforms to reduce the Hamming weight of
the rational fraction describing their action on the $x$-coordinate. Among
others, we find the rational fractions of Table~\ref{tbl:multprim}.
Note that, as expected, the number of such endomorphisms grows with $q$.

\begin{table}
\begin{center}
\begin{tabular}{r|l}
$q$ & \textsc{representative fraction} \\
\hline
\hline
$2$ & $(x^7 + x^3 + x)/(x^9 + x^6 + x^5 + x^2 + 1)$ \\
$7$ & $(2x^{10} + 4x^9 + x^6 + x^5 + 3x^4 + 2x + 3)/(x^9 + 6x^8 + 5x^5 + x^2 + 4x)$ \\
$17$ & $(13x^9 + 4x^7 + x^5 + 8x)/(x^8 + 13x^4 + 11x^2 + 15)$ \\
$17$ & $(9x^9 + 3x^7 + 13x^5 + 10x^3 + 9x)/(x^8 + x^6 + x^4 + 4x^2 + 4)$ \\
$19$ & $(11x^4 + 17x^2 + 8)/(x^4 + 1)$ \\
$19$ & $(18x^9 + x^7 + 14x^5 + 11x^3 + 12x)/(x^{10} + 5x^8 + x^6 + 7x^4 + 5x^2 + 11)$ \\
$19$ & $(13x^5 + 10x^3 + 10x)/(x^6 + 1)$ \\
$19$ & $(11x^3 + 11x)/(x^4 + 1)$ \\
$19$ & $(16x^3 + x)/(x^4 + 4x^2 + 17)$ \\
$19$ & $(16x^{10} + 13x^6 + 12x^4 + 1)/(x^9 + x^7 + 10x^5 + 4x^3 + 16x)$ \\
$19$ & $(8x^6 + 14x^4 + 14x^2 + 8)/(x^5 + x^3 + x)$ \\
$19$ & $(x^4 + 7)/(x^4 + 14x^2 + 12)$
\end{tabular}
\end{center}
\caption{Some rational fractions which induce irreducible families, computed as cyclic endomorphisms of small degree.}
\label{tbl:multprim}
\end{table}

\section{Isogenies of ordinary curves over number fields}

Let $\E$ be an elliptic curve defined over a number field $K$ which admits a
rational endomorphism $\alpha:\E\to\E$ with cyclic kernel. For all places
$\ppp$ of good reduction where the localization of $\alpha$ still has cyclic
kernel, the reduction of $\alpha$ to $K/\ppp$ yields an endomorphism
$\varphi_1$ to which Theorem~\ref{th:main} may be applied. By the Cebotarev
density theorem, the rational fraction defining $\alpha$ in characteristic zero
can thus be applied to a positive density of finite fields.


\paragraph{Endomorphisms of degree two.}
The simplest case concerns elliptic curves defined over the rationals and
endowed with an endomorphism of degree two. Their $j$-invariants are the roots
of the modular polynomial $\Phi_2(j,j)$ and their endomorphisms can be computed
explicitly, resulting in the following theorem. See, for instance,
\cite[Proposition~2.3.1]{silverman-advanced}.

\begin{proposition}
There are exactly three isomorphism classes of elliptic curves over $\CC$ which
possess an endomorphism of degree 2. The following are representatives for
these curves and endomorphisms.
\begin{center}
\everymath{\displaystyle}\rm
\begin{tabular}{rlll}
(i)   & $E:y^2=x^3+x$, & $j=1728$, & $\alpha=1+\sqrt{-1}$, \\
      & \multicolumn{3}{l}{$[\alpha](x,y)=\left(\alpha^{-2}\left(x+\frac{1}{x}\right),\alpha^{-3}y\left(1-\frac{1}{x^2}\right)\right)$;} \smallskip\\
(ii)  & $E:y^2=x^3+4x^2+2x$, & $j=8000$, & $\alpha=\sqrt{-2}$, \\
      & \multicolumn{3}{l}{$[\alpha](x,y)=\left(\alpha^{-2}\left(x+4+\frac{2}{x}\right),\alpha^{-3}y\left(1-\frac{2}{x^2}\right)\right)$;} \smallskip\\
(iii) & $E:y^2=x^3-35x+98$, & $j=-3375$, & $\alpha=\frac{1+\sqrt{-7}}{2}$, \\
      & \multicolumn{3}{l}{$[\alpha](x,y)=\left(\alpha^{-2}\left(x-\frac{7(1-\alpha)^4}{x+\alpha^2-2}\right),\alpha^{-3}y\left(1+\frac{7(1-\alpha)^4}{(x+\alpha^2-2)^2}\right)\right)$.} \\
\end{tabular}
\end{center}
\end{proposition}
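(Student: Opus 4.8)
The plan is to translate the geometric condition into arithmetic and then enumerate. An elliptic curve $E/\CC$ carrying an endomorphism of degree $2$ cannot have $\End(E)=\ZZ$, since the integer multiplications all have square degree; hence $E$ has complex multiplication by an order $\O$ in an imaginary quadratic field $K$, and the degree-$2$ endomorphism is an element $\alpha\in\O$ with $\norm_{K/\QQ}(\alpha)=\deg[\alpha]=2$. The statement thus splits into three tasks: determine which orders $\O$ contain an element of norm $2$; for each such order count the isomorphism classes of curves with $\End(E)\cong\O$; and exhibit explicit Weierstrass models together with the formula for $[\alpha]$.

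For the first task I would parametrize orders by their discriminant $D<0$ and ask when the principal norm form of $\O$ represents $2$. When $D\equiv 0\bmod 4$, writing $m=|D|/4\geq 1$, the form is $x^2+my^2$; since $x^2+my^2\geq 4m\geq 4$ whenever $|y|\geq 2$, only $|y|\leq 1$ is possible, and then $x^2=2-m$ forces $m\in\{1,2\}$, giving $D=-4$ and $D=-8$. When $D\equiv 1\bmod 4$, writing $c=(1-D)/4\geq 1$, the form is $x^2+xy+cy^2=(x+y/2)^2+(c-\frac{1}{4})y^2$, which exceeds $2$ once $|y|\geq 2$; the remaining case $|y|=1$ reduces to $x^2\pm x+(c-2)=0$, whose discriminant $9-4c$ is a nonnegative perfect square only for $c=2$, giving $D=-7$. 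So the admissible orders are exactly those of discriminant $-4$, $-8$ and $-7$, each of which is maximal (as $\ZZ[\alpha]=\O_K$ in all three cases), with norm-$2$ elements $1+\sqrt{-1}$, $\sqrt{-2}$ and $\frac{1+\sqrt{-7}}{2}$ respectively, matching the $\alpha$ of the statement.

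For the counting I would invoke the main theorem of complex multiplication quoted earlier in the paper: $\cl(\O)$ acts simply transitively on the isomorphism classes of curves with endomorphism ring $\O$, so their number is the class number $h(\O)$. Each of the three discriminants $-4$, $-8$, $-7$ has class number one, so each order contributes a single isomorphism class and the total is exactly three, as claimed. As a consistency check, the three values $1728$, $8000$ and $-3375$ are precisely the distinct roots of $\Phi_2(j,j)$, the polynomial parametrizing curves admitting a cyclic $2$-isogeny to an isomorphic curve, and equal $j(\sqrt{-1})$, $j(\sqrt{-2})$ and $j(\frac{1+\sqrt{-7}}{2})$.

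The remaining, and most laborious, step is to confirm the displayed rational maps. For each of the three models I would check that the given $[\alpha]$ is a morphism fixing the origin, that the degree of its $x$-coordinate map is $2$, and that its pullback acts on the invariant differential $dx/y$ by multiplication by the stated $\alpha$ (which pins down $[\alpha]$ rather than its complex conjugate $[\bar\alpha]$). This is the main obstacle, in that it is the only genuinely computational part; conceptually it is routine division-polynomial bookkeeping, and each formula can be cross-checked against the cited reference \cite[Proposition~2.3.1]{silverman-advanced}.
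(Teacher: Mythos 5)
Your argument is correct, but it takes a genuinely different route from the paper, which gives no self-contained proof at all: the paper merely remarks that the relevant $j$-invariants are the roots of $\Phi_2(j,j)$, that the endomorphisms "can be computed explicitly," and defers entirely to \cite[Proposition~2.3.1]{silverman-advanced}, where the classification is carried out by explicit computation — placing the kernel of the degree-two endomorphism at $(0,0)$, writing the quotient curve via the standard $2$-isogeny formulas, and solving for the models isomorphic to their quotient, which produces the three Weierstrass equations and maps as a byproduct. You instead argue through complex multiplication: a degree-two endomorphism rules out $\End(E)=\ZZ$ (integer multiplications have square degree), so $E$ has CM by an order $\O$ containing an element of norm $2$; this means the principal form of the discriminant $D$ represents $2$, and your case analysis correctly isolates $D\in\{-4,-8,-7\}$ (the bound for $|y|\geq 2$ and the requirement that $9-4c$ be a nonnegative perfect square both check out); since each of these orders has class number one, the count is exactly three. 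Your route buys conceptual clarity and generality — the same scheme works for any degree $\ell$, giving the count as a sum of class numbers over the finitely many discriminants whose principal form represents $\ell$ — but it cannot produce the displayed equations and rational maps by itself, which is precisely the step you defer to a routine verification. That deferred check is stated correctly and precisely (morphism fixing the origin, $x$-map of degree two, pullback of $dx/y$ equal to $\alpha\,dx/y$, the last condition rightly distinguishing $[\alpha]$ from $[\bar\alpha]$), so this is not a gap in substance; it simply means your proof and the paper's citation are complementary: yours establishes "exactly three," while the computational route establishes "these three."
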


We note that the first endomorphism corresponds to the well-known $Q$-transform.

\paragraph{Endomorphisms of degree three.}
The same approach applies to higher-degree endomorphisms although the explicit
formulas describing them are much heavier that in the above degree-two case.

Consider for instance the elliptic curve $E:y^2 + 6xy + 4y = x^3$ with
$j$-invariant $54000$. Since it is a root of the modular polynomial
$\Phi_3(j,j)$, it admits a degree-three endomorphism. Indeed, this endomorphism
can be written explicitly as $\varphi \circ \phi$ where
$\alpha=\frac{1+\sqrt{-3}}{2}$ and
\begin{align*}
\phi(x,y)&= \left( x+\frac{24}{x}+\frac{16}{x^2},\,  y -\frac{64}{x^3} -\frac{24(6x+y+4)}{x^2} \right), \\
\varphi(x,y)&= \left(-\frac{1}{3}x - 4,\, -\frac{1}{3\sqrt{-3}} y +\frac{3-\sqrt{-3}}{3}x -\frac{2}{3\sqrt{-3}}+10 \right).
\end{align*}




\section{Density of irreducible families}
\label{sec:density}

Let $S$ be a rational fraction over a fixed finite field $\FF_q$. We are
interested in computing the density of irreducible polynomials $f$ of small
degree $d$ from which $S$ induces irreducible families. Through the Cebotarev
density theorem, the conditions Corollary~\ref{cor:iter} may be used to compute
these densities asymptotically. However this is burdensome and thus most
entries in the tables below were obtained through exhaustive computations.

First consider the rational fraction $S=(x^2+1)/x$ over $\FF_3$.
Table~\ref{tbl:niter} indicates, for selected integers $i$ and $d$, the density
of irreducible polynomials of degree $d$ which remain irreducible under only
just $i$ iterations of the transform $S$. Each column adds up to one.


\begin{table}
\begin{center}
\begin{tabular}{l|ccccc}
            &  $d=2$  &  $d=3$  &  $d=4$  &  $d=5$  &  $d=6$    \\
\hline                                                          
$i=0$       &  $1/3$  &  $1/2$  &  $4/9$  &  $1/2$  &  $14/29$  \\
$i=1$       &  $0$    &  $1/2$  &  $0$    &  $1/2$  &  $0$      \\
$i=\infty$  &  $2/3$  &  $0$    &  $5/9$  &  $0$    &  $15/29$  \\
\end{tabular}
\end{center}
\caption{Density of irreducible polynomials of degree $d$ over $\FF_3$ which
remain irreducible under only just $i$ iterations of the map $T_S$ where $S=(x^2+1)/x$.}
\label{tbl:niter}
\end{table}

In Table~\ref{tbl:density}, we only give the density of irreducible polynomials
of degree $d$ over $\FF_q$ from which the rational fraction $S$ induces
irreducible families. In the particular case where $S=(x^2+1)/x$ and $q=3$,
this corresponds to the line $i=\infty$ of Table~\ref{tbl:niter}.


\begin{table}
\begin{center}
\begin{tabular}{l|ccccc}
\multicolumn{6}{c}{$\displaystyle S=\frac{x^2+1}{x}$}
\smallskip \\
        &  $d=2$   &  $d=3$           &  $d=4$           &  $d=5$           &  $d=6$           \\
\hline                                                                                         
$q=2$   &  $1$     &  $0$             &  $1/3$           &  $1/3$           &  $2/9$           \\
$q=3$   &  $2/3$   &  $0$             &  $5/9$           &  $0$             &  $15/29$         \\
$q=5$   &  $0$     &  $0$             &  $0$             &  $0$             &  $0$             \\
$q=7$   &  $8/21$  &  $0$             &  $12/49$         &  $0$             &  $\approx 0.25$  \\
$q=11$  &  $8/55$  &  $\approx 0.12$  &  $\approx 0.13$  &  $\approx 0.12$  &  $\approx 0.12$  \\
$q=13$  &  $2/13$  &  $11/91$         &  $\approx 0.13$  &  $\approx 0.13$  &  $\approx 0.13$  \\
\end{tabular}
\end{center}

\begin{center}
\begin{tabular}{l|ccccc}
\multicolumn{6}{c}{$\displaystyle S=\frac{1}{2}\frac{x^2+1}{x}$}
\smallskip \\
        &  $d=2$   &  $d=3$  &  $d=4$           &  $d=5$  &  $d=6$           \\
\hline                                                                       
$q=3$   &  $2/3$   &  $0$    &  $5/9$           &  $0$    &  $15/29$         \\
$q=5$   &  $3/5$   &  $1/2$  &  $13/25$         &  $1/2$  &  $\approx 0.50$  \\
$q=7$   &  $4/7$   &  $0$    &  $25/49$         &  $0$    &  $\approx 0.50$  \\
$q=11$  &  $6/11$  &  $0$    &  $\approx 0.50$  &  $0$    &  $\approx 0.50$  \\
$q=13$  &  $7/13$  &  $1/2$  &  $\approx 0.50$  &  $1/2$  &  $\approx 0.50$  \\
$q=17$  &  $9/17$  &  $1/2$  &  $\approx 0.50$  &  $1/2$  &  $\approx 0.50$  \\
\end{tabular}
\end{center}

\begin{center}
\begin{tabular}{l|ccccc}
\multicolumn{6}{c}{$\displaystyle S=\alpha^{-2}\left(x-\frac{7(1-\alpha)^4}{x+\alpha^2-2}\right)$ where $\displaystyle\alpha=\frac{1+\sqrt{-7}}{2}$}
\smallskip \\
        &  $d=2$           &  $d=3$           &  $d=4$           &  $d=5$           &  $d=6$           \\  
\hline                                                                                                     
$q=11$  &  $16/55$         &  $13/55$         &  $\approx 0.26$  &  $\approx 0.25$  &  $\approx 0.25$  \\  
$q=23$  &  $\approx 0.25$  &  $\approx 0.25$  &  $\approx 0.25$  &  $\approx 0.25$  &  $\approx 0.25$  \\  
$q=29$  &  $8/29$          &  $\approx 0.25$  &  $\approx 0.25$  &  $\approx 0.25$  &  $\approx 0.25$  \\  
$q=37$  &  $\approx 0.26$  &  $\approx 0.25$  &  $\approx 0.25$  &  $\approx 0.25$  &  $\approx 0.25$  \\  
$q=43$  &  $\approx 0.25$  &  $\approx 0.25$  &  $\approx 0.25$  &  $\approx 0.25$  &  $\approx 0.25$  \\  
$q=53$  &  $\approx 0.26$  &  $\approx 0.25$  &  $\approx 0.25$  &  $\approx 0.25$  &  $\approx 0.25$  \\  
\end{tabular}
\end{center}

\caption{Density of irreducible polynomials of degree $d$ over $\FF_q$ from
which the rational fraction $S$ induces irreducible families.}
\label{tbl:density}
\end{table}

\section*{Acknowledgments}

Gaetan Bisson was supported by \emph{Agence Nationale de la Recherche} under the MELODIA project, grant number ANR-20-CE40-0013.


\printbibliography

\end{document}